\tikzset{
    position/.style args={#1:#2 from #3}{
        at=(#3.#1), anchor=#1+180, shift=(#1:#2)
    }
}
\newtheorem{theorem}{Theorem}
\newtheorem{proposition}[theorem]{Proposition}
\newtheorem{observation}[theorem]{Observation}
\newtheorem{conjecture}[theorem]{Conjecture}
\title{Dots and Boxes on Certain Families of Graphs}
\author{Vedant Aryan\footnote{va0200@princeton.edu }, Alana Palmer\footnote{alanapalmer13@gmail.com}, Alexander Skula\footnote{skula@mit.edu}, Matthew Woolbert\footnote{ matthew.k.woolbert@gmail.com},\\ Joshua Zelinsky\footnote{jzelinsky@hopkins.edu}}
\date{}
\begin{document}

\maketitle

\begin{abstract}
    We investigate the Dots and Boxes game,  also known as ``Strings and Coins,'' for certain specific families of graphs. These include complete graphs, wheel graphs, and friendship graphs.  
\end{abstract}

\section*{Introduction}

 The traditional Dots and Boxes game involves two players taking turns drawing line segments between adjacent dots on a fixed grid. The objective is to complete the boundaries of boxes to claim them. Whenever a player completes a box, they earn a point and must draw another line segment.
 
 This game can be represented as a graph by replacing each box with a vertex and each line segment with an edge connecting the corresponding vertices. A line on the boundary corresponds to a self-loop.

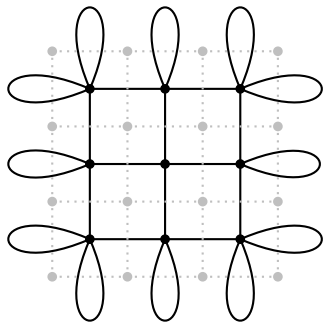
\begin{figure}[hbt]
\begin{tikzpicture}[thick, color=black, main/.style={draw, circle, inner sep=0, outer sep=0, minimum size=1mm}]
    \foreach \x in {0,1,2}
        \foreach \y in {0,1,2}
        {
            \node[main, fill=black] at (\x,\y) (\x\y) {};
        }

    \foreach \x in {0,1,2}
        \foreach \y in {0,1,2}
        {
            \ifnum\x<2
                \draw (\x,\y) to (\x+1,\y);
            \fi
            \ifnum\y<2
                \draw (\x,\y) -- (\x,\y+1);
            \fi
        }
    \draw (00) to[out=157.5, in=202.5, looseness=100] (00);
    \draw (00) to[out=247.5, in=292.5, looseness=100] (00);
    \draw (01) to[out=202.5, in=157.5, looseness=100] (01);
    \draw (02) to[out=157.5, in=202.5, looseness=100] (02);
    \draw (02) to[out=67.5, in=112.5, looseness=100] (02);
    \draw (10) to[out=-112.5, in=-67.5, looseness=100] (10);
    \draw (12) to[out=112.5, in=67.5, looseness=100] (12);
    \draw (20) to[out=247.5, in=292.5, looseness=100] (20);
    \draw (20) to[out=337.5, in=22.5, looseness=100] (20);
    \draw (21) to[out=22.5, in=-22.5, looseness=100] (21);
    \draw (22) to[out=337.5, in=22.5, looseness=100] (22);
    \draw (22) to[out=67.5, in=112.5, looseness=100] (22);
    
    \foreach \x in {-1,1,3,5}
        \foreach \y in {-1,1,3,5}
        {
            \node[main, fill=lightgray, color=lightgray] at (\x/2,\y/2) (\x,\y) {};
        }
    \foreach \x in {-1,1,3,5}
        \foreach \y in {-1,1,3,5}
        {
            \ifnum\x<5
                \draw[dotted, lightgray] (\x/2,\y/2) to (\x/2+1,\y/2);
            \fi
            \ifnum\y<5
                \draw[dotted, lightgray] (\x/2,\y/2) -- (\x/2,\y/2+1);
            \fi
        }
\end{tikzpicture}
\centering
\caption{Example of Reimplementation of Dots and Boxes}
\label{fig:reimplementation}
\end{figure}
\FloatBarrier
 
In the graph version of the game, players take turns removing edges. Whenever all the edges connected to a given vertex are removed, the player earns a point and must take another edge.

\par
This variant of Dots and Boxes has been studied since the early 2000s, with the most notable work being the book ``The Dots and Boxes Game'' 
by Elwyn Berlekamp \cite{Berlekamp}. This graph version of the game is sometimes called ``Strings and Coins,'' where each vertex is imagined as a coin and each edge as a string that needs to be cut.

When playing the game on a graph, one does not need to restrict to a graph that correspond to actual grids, although it is assumed that in the starting configuration, every vertex has a non-zero degree. Prior work has studied the computational complexity of this game with an arbitrary starting graph. In particular, Demaine and Diomidova \cite{DD} proved that determining the winner of this game is strongly PSPACE-complete.

The main tactic of their argument is to examine a closely related game where the player who makes the last move loses. This game, called Nimstring, can be embedded into the Dots and Boxes game. Given a graph $G$, one can create a new graph $H$ by taking the disjoint union of $G$ a very large cycle graph $C$ with more vertices than the rest of $G$. Whoever cuts the very large cycle first will lose. Thus, the player who moves last on $G$ is forced to cut $C$, ensuring that the other player wins the overall game. 

A similar but distinct proof that the game is PSPACE-complete was also given by Buchin, Hagedoorn, Kostitsyna, and van Mulken \cite{BHKM}.

The approach of this paper is to examine various families of graphs and determine the winner within those families, rather than considering the general game. In this context, it bears some similarity to a prior paper by Jobson, Sledd, White, and Wildstrom \cite{JSWW} who determined the winner for various subgraphs of the standard Dots and Boxes game. Among other results, they proved a conjecture by Nowakowski and Ottaway (see pg. 470 of \cite{Nowakowski}) showing that the related game of Dots and Triangles, played on a triangular grid with two rows and $n+1$ columns of vertices, is a first player win. They also proved that in Dots and Boxes with two rows of dots and $n+1$ columns, the first player can at least force a tie.

Unlike their work, this paper examines graph families that generally do not arise from a standard grid. The families investigated here include loopy stars, loopy trees, complete graphs, complete bipartite graphs, friendship graphs, wheel graphs, and hypercubes. For some of these families, we provide complete proofs of their general behavior. For others, we offer conjectures based on data from many cases. We have complete descriptions with proofs of the winners for loopy stars, loopy trees, and friendship graphs. We have conjectured behavior for complete graphs, wheel graphs, and hypercubes. In the case of wheel graphs, we also outline what we hope might be developed into a proof of the conjectured behavior by connecting them to certain other graph families.

We will refer to a graph as a ``first player win'' graph if the first player wins the corresponding game, a ``second player win'' graph if the second player wins the corresponding game, and a ``tie'' graph if the game ends in a tie.

Certain graph families are essentially trivial. If $G$ is a tree, then it is a first player win, as the first player can take all vertices. 

The relationship between the winner on similar graphs can be subtle. For example, one might conjecture that if $G$ is a graph, adding a single loop to an existing vertex will always swap the winner. However, this is not necessarily the case. A counterexample is a $C_5$ with a single loop attached to one vertex. This graph is a first player win. If a loop is added to one of the two vertices adjacent to the vertex with the existing loop, the first player can still win by taking the edge between the two vertices with loops. However, if a graph $G$ is a second player win, then adding a single loop to an existing vertex in $G$ always makes it a first player win, because the first player can take the loop and then play as the second player on the reduced graph.

\section{Preliminaries}
Berlekamp discussed that the winner on a cycle $C_n$ is always Player 2. After Player 1 makes the first move, the cycle becomes a path, allowing Player 2 to take all the remaining vertices. Player 2 also wins on disconnected unions of cycles where each cycle has a length of at least 8. After Player 1 transforms one of the cycles into a path with the first move, Player 2 continues as follows. Let $n$ be the number of vertices in the cycle that Player 1 broke with their first move. Player 2 takes vertices and edges from the path until at most 3 vertices are left. The remaining edges and vertices can then be taken to the benefit of Player 2. At this point, Player 1 must then break another large cycle, and Player 2 can repeat this strategy until Player 1 breaks the final cycle, at which point Player 2 can take all the remaining vertices and win.

The winner on any tree is always Player 1. Every tree has at least two leaves (vertices with degree one), so Player 1 can always remove the edge connecting to a leaf and take the vertex it exposes. Player 1 can continue this strategy on the remaining tree until all vertices are taken. Similarly, in forests, Player 1 can employ the same leaf-removal strategy on each tree, ultimately taking all the vertices.

\section{Computations}

We developed an algorithm that, given a graph, determines the winner and calculates the best score they can achieve. The algorithm was implemented in Python using the NetworkX library, which is designed for graph theory applications.

The program uses canonical representations of graph families such as complete graphs, wheel graphs, friendship graphs, and hypercubes. It is also capable of handling custom graph types by dynamically generating these structures based on input edges.

The software operates by taking a graph structure as input and recursively evaluating each possible move. It employs memoization to check for previously calculated outcomes of graph configurations, thereby reducing redundant computations. This approach allows for a more efficient exploration of outcomes across various graph configurations compared to a naive, brute-force method.

The software is available on our GitHub page \cite{GitHub}.

We will discuss empirical results for each graph family in their corresponding sections.

\section{Friendship Graphs}
Friendship graphs, denoted as $F_n$, are formed by $n$ copies of $K_3$ all sharing a single vertex. 

\begin{figure}[hbt!]
\centering
\begin{tikzpicture}[thick, main/.style={draw, circle, inner sep=0, outer sep=0, minimum size=1.5mm}]
    \node[main,fill=black] (1) {};
    \node[main,fill=black] (2)[position=-120:{2cm} from 1] {};
    \node[main,fill=black] (3)[position=180:{2cm} from 1] {};
    \node[main,fill=black] (4)[position=60:{2cm} from 1] {};
    \node[main,fill=black] (5)[position=-240:{2cm} from 1] {};
    \node[main,fill=black] (6)[position=0:{2cm} from 1] {};
    \node[main,fill=black] (7)[position=300:{2cm} from 1] {};
    \draw (1) -- (2) -- (3) -- (1) -- (4) -- (5) -- (1) -- (6) -- (7) -- (1);
\end{tikzpicture}
\caption{An example of a Friendship Graph $F_3$ consisting of three $K_3$s sharing a common center vertex.}
\label{fig:friendship-graph-f3}
\end{figure}
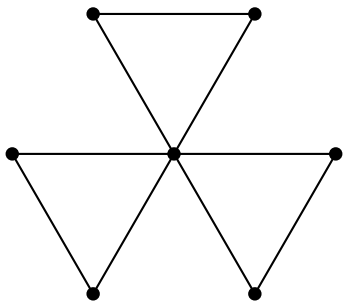
\FloatBarrier

\begin{theorem}
    In a game played on a friendship graph $F_n$, the winner alternates between Player 1 and Player 2, with Player 1 winning by one point for even $n$, and Player 2 winning by three points for odd $n$.  Specifically, for even $n$, the score is $(n +1, n)$, while for odd $n$, the score is $(n- 1, n + 2)$.
\end{theorem}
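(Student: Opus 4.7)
The plan is to prove the theorem by induction on $n$, tracking $V_n := (\text{Player 1 score}) - (\text{Player 2 score})$ under optimal play. The base case is $F_1 = K_3$: whichever edge Player 1 cuts, Player 2 is handed a $P_3$ and claims all three vertices, giving $V_1 = -3$. The inductive step will establish the recurrence $V_n = -2 - V_{n-1}$, which together with $V_1 = -3$ yields $V_n = 1$ for even $n$ and $V_n = -3$ for odd $n$, matching the stated pattern.

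For the inductive step, I propose the following strategies. Player 1 opens by cutting the outer edge of one triangle (the edge not touching the center $c$). Player 2 then claims both non-center vertices of that triangle by cutting the two incident inner edges in succession, netting two points, and is then required to make a non-claim move: she cuts the outer edge of a different triangle. After these four moves the remaining graph is $F_{n-1}$ on $c$ together with the other $n-1$ triangles, with one triangle's outer edge already removed and Player 1 to move. This is exactly the position reached immediately after the optimal first move in an $F_{n-1}$ game, but with the two players' roles swapped: Player 2 of $F_n$ plays the part of the first mover in the sub-game. By the inductive hypothesis, the score differential in the sub-game from that player's perspective is $V_{n-1}$, so adding Player 2's two already-claimed points yields $V_n = -2 - V_{n-1}$.

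Two directions remain. \emph{(Upper bound.)} Player 2's strategy must cap Player 1 at $V_n$ against every Player 1 opening; by symmetry the only inequivalent first moves are an outer cut or an inner cut of some triangle, and in either case Player 2 can still claim both outer vertices of that triangle and then sacrifice a second triangle's outer edge, reaching the same sub-position. \emph{(Lower bound.)} Player 1's cut-outer opening must guarantee at least $V_n$ regardless of Player 2's reply. This is where the main obstacle lies: one must examine Player 2's alternative responses -- claiming only one outer vertex before sacrificing, sacrificing an inner edge in place of the outer edge, or mirroring with an outer cut on a second triangle without ever taking the first -- and in each case exhibit a Player 1 continuation (greedily claiming every currently-available vertex, then sacrificing the outer edge of an intact triangle) that achieves at least $V_n$. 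The quantitative key is that each triangle contributes exactly $3$ points and that transferring ownership of a triangle between players shifts the differential by only $2$, which is never enough to beat the hard-hearted response; this makes the recurrence $V_n = -2 - V_{n-1}$ tight and drives the alternating $(1, -3)$ pattern by induction.
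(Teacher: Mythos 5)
Your proposal takes essentially the same route as the paper: both arguments rest on the recurrence $V_n = -V_{n-1} - 2$ obtained by having the opener break one triangle (cutting its outer edge), after which the opponent claims that triangle's two outer vertices and becomes the first mover on the residual $F_{n-1}$, with the base case $F_1 = K_3$ giving $V_1 = -3$. If anything, you are more explicit than the paper about the remaining obligation of verifying optimality in both directions (that neither player gains by declining the two-vertex offer or by deviating from the triangle-by-triangle exchange), a step the paper's proof asserts without detailed checking.
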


\begin{proof}
We will prove this by induction on $n$.

\textbf{Base Cases:}

\begin{itemize}
    \item $F_1$:  The graph is a single $K_3$. Player 2 wins with a score of (0, 3). This matches the theorem: $n=1$ is odd, and the score is (1-1, 1+2) = (0, 3).
    \item $F_2$: Player 1 wins. Player 1 takes an edge that is not connected to the center vertex. Player 2 must take the two adjacent vertices.  Player 1 ends with a score of (3,2), matching the theorem: $n=2$ is even and the score is (2+1, 2) = (3, 2).
\end{itemize}

\textbf{Inductive Step:}

Assume that the theorem holds for $F_k$ for all $k < n$.  We will show it holds for $F_n$. We need to consider two cases: $n$ is even, and $n$ is odd.

\textbf{Case 1: $n$ is even.} Player 1's first move is to take an edge of one of the $K_3$s that is not connected to the central vertex.  Player 2 must then take the two vertices adjacent to that edge (otherwise, Player 1 will simply take them on their next turn). This leaves the remaining graph as $F_{n-1}$, and Player 2 has been forced to start the game on $F_{n-1}$. By the inductive hypothesis, since $n-1$ is odd, the score on $F_{n-1}$ will be ($(n-1) - 1$, $(n-1) + 2$) = ($n-2$, $n+1$). Thus, the overall score for $F_n$ will be ($2 + (n-2)$, $n+1$) = ($n$, $n+1$).  Since Player 1 took the first edge as well, the overall score is $(n+1, n)$. This matches the formula for even $n$. Therefore, Player 1 wins by one point.

\textbf{Case 2: $n$ is odd.} Player 1's first move is to take any edge. This forces Player 2 to play $F_{n-1}$.  By the inductive hypothesis, since $n-1$ is even, the score on $F_{n-1}$ will be  (($n-1$) + 1, $n-1$) = ($n$, $n-1$).  Thus, the overall score for $F_n$ will be ($n, n-1$). Since Player 1 took the first edge, the overall score is either ($n+1, n-1$) or ($n, n$), depending on whether the edge came with a vertex. However, Player 1 could have, instead, taken a loop from one of the end vertices, for a final score of ($n-1, n+2$). Therefore, Player 2 wins by three points.

Therefore, by induction, the theorem holds for all $n \ge 1$.
\end{proof}

\begin{table}[hbt!]
\centering
\begin{tabular}{|c|c|c|}
\hline
\textbf{\# of Triangles} & \textbf{Winner} & \textbf{Score} \\
\hline
1 & P2 & (0, 3) \\
2 & P1 & (3, 2) \\
3 & P2 & (2, 5) \\
4 & P1 & (5, 4) \\
5 & P2 & (4, 7) \\
6 & P1 & (7, 6) \\
7 & P2 & (6, 9) \\
8 & P1 & (9, 8) \\
\hline
\end{tabular}
\caption{Computed outcomes for Friendship Graphs based on the number of triangles.}
\label{table:friendship-graph-outcomes}
\end{table}
\FloatBarrier 

\section{Pinwheel Graphs}
A Pinwheel Graph $PW_n$ is formed by joining $n$ copies of $C_4$ to a single shared vertex.
\begin{theorem}
    In the Pinwheel graph game, Player 2 wins for all $PW_n$.  Specifically, the score is (0, 4) for $n = 1$, (2, 5) for $n = 2$, and for $n \geq 3$, we have a score of $(n + 2, n + 3)$ for even $n$, and $(n + 2, n + 4)$ for odd $n$.
\end{theorem}
\begin{proof}
    We will prove this theorem by examining the winning strategies for Player 2 in various cases of $PW_n$. The general strategy for Player 2 will revolve around strategically responding to Player 1's moves to either immediately capture an entire square or force Player 1 into a situation where they must concede vertices to Player 2.  A key element will be to control the center vertex as long as possible and to ensure that if Player 1 takes the center vertex, they do not gain an overwhelming advantage.
    
    \textbf{Base Cases:}

    \begin{itemize}
        \item $PW_1$: is isomorphic to $C_4$, which is known to be a Player 2 win with a score of (0, 4).
        \item $PW_2$: Player 2 wins with a score of (2, 5). Suppose Player 1's first move is to remove the center edge of one cycle. Player 2 immediately finishes the cycle to gain 4 points before Player 1 can claim any points.  In this situation Player 2 wins with a score of (2, 5). Suppose Player 1's first move is to remove an outer edge of one cycle. Player 2 then takes the neighbor vertex to the edge. From that vertex, Player 2 takes the edge that would finish the adjacent cycle, leaving Player 1 to take all 2 vertices, and player 2 to take all 5 of the other vertices. Thus, Player 2 wins with a score of (2, 5). 
        \item $PW_3$: Player 2 wins by two points. Player 2 can use the strategy of responding to Player 1's moves to either immediately capture an entire square or force Player 1 into a situation where they must concede vertices to Player 2, leading to Player 2 winning.
        \item $PW_4$: Player 2 wins by one point. Player 2 can either implement the strategy of responding to Player 1's moves to either immediately capture an entire square or force Player 1 into a situation where they must concede vertices to Player 2 or use the strategy of taking the first cycle to force Player 1 into $PW_3$ with 3 vertices to make up 2 points.
    \end{itemize}
    
    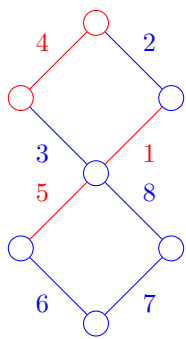
\begin{figure}[hbt!]
\centering
\begin{tikzpicture}
    \node[circle, draw, color=blue] (1) at (0,0) {};
    \node[circle, draw, color=blue] (2) at (1,1) {};
    \node[circle, draw, color=red] (3) at (-1,1) {};
    \node[circle, draw, color=red] (4) at (0,2) {};
    \node[circle, draw, color=blue] (5) at (-1,-1) {};
    \node[circle, draw, color=blue] (6) at (1,-1) {};
    \node[circle, draw, color=blue] (7) at (0,-2) {};  
    \draw[red] (2) edge["{1}"] (1);
    \draw[blue] (4) edge["{2}"] (2);
    \draw[red] (3) edge["{4}"] (4);
    \draw[blue] (1) edge["{3}"] (3);
    \draw[red] (5) edge["{5}"] (1);
    \draw[blue] (7) edge["{6}"] (5);
    \draw[blue] (6) edge["{7}"] (7);
    \draw[blue] (1) edge["{8}"] (6);
\end{tikzpicture}
\caption{Winning strategy for Player 2 in $PW_2$}
\label{fig:pw2}
\end{figure}
\FloatBarrier 

\textbf{Inductive Step:} Assume that Player 2 wins for all $PW_k$ where $k < n$ and $n \geq 5$. We want to show that Player 2 wins for $PW_n$.

Suppose Player 1's first move is to take the center edge. Player 2 can then take the remaining edges in the same cycle and become Player 1 on a smaller Pinwheel graph and with extra verticies, guaranteeing a win. Suppose Player 1's first move is to take an edge in the outer cycle. Player 2 can then move on from there to take an edge on the same cycle or a different one, depending on what leads to a greater advantage.

Therefore, Player 2 can win on all Pinwheel graphs.
\end{proof}

\begin{table}[hbt!]
\centering
\begin{tabular}{|c|c|c|}
\hline
\textbf{\# of Squares} & \textbf{Winner} & \textbf{Score} \\
\hline
1 & P2 & (0, 4) \\
2 & P2 & (2, 5) \\
3 & P2 & (4, 6) \\
4 & P2 & (6, 7) \\
5 & P2 & (7, 9) \\
6 & P2 & (9, 10) \\
7 & P2 & (10, 12) \\
8 & P2 & (12, 13) \\
\hline
\end{tabular}
\caption{Computed outcomes for Pinwheel Graphs based on the number of squares.}
\label{table:pinwheel-graph-outcomes}
\end{table}
\FloatBarrier 

\section{Loopy Stars}

In this section, we introduce a unified notation for loopy star graphs. A loopy star graph is denoted by $L(n, k, l)$, where:

\begin{enumerate}
    \item $n$ = number of branches
    \item $k$ = number of loops at the leaf of each branch
    \item $l$ = length (number of edges) of the branches
\end{enumerate}

If a variable is 1, it can be left out from the right. So, $L(n)$ is a loopy star with $n$ branches of length 1 with 1 loop each, and $L(n, k)$ is a loopy star with $n$ branches of length 1, each with $k$ loops.

Loopy stars are a class of graphs characterized by a central vertex from which multiple branches emanate, each terminating in a loop. They are called loopy stars because they are star graphs with loops attached to each branch.

\begin{theorem}
In loopy stars $L(n)$, the winner alternates based on the parity of the total number of vertices. Specifically, Player 1 wins by two points in loopy stars with an odd number of branches, while Player 2 wins by three points in loopy stars with an even number of branches.
\end{theorem}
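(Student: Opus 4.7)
The plan is induction on $n$, the number of branches in $LS_n$ (so $LS_n$ has $n+1$ vertices: a center $c$ and branches $v_1,\ldots,v_n$, each carrying a loop $\ell_i$ and a single edge $cv_i$). Let $V(LS_n)$ denote the first-player-minus-second-player score under optimal play. The theorem is equivalent to $V(LS_n) = +2$ when $n+1$ is even (i.e.\ $n$ odd) and $V(LS_n) = -3$ when $n+1$ is odd (i.e.\ $n$ even). Both sequences satisfy the single recurrence $V(LS_n) = -V(LS_{n-1}) - 1$, so it suffices to prove this recurrence for $n \ge 3$ with base cases $V(LS_1)=+2$ and $V(LS_2)=-3$ handled by direct enumeration of the small game tree.

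For the upper bound $V(LS_n) \le -V(LS_{n-1}) - 1$, I specify a strategy for Player 2. Any first move by Player 1 is either a cut of some edge $cv_i$ (leaving $v_i$ with only its loop) or a cut of some loop $\ell_i$ (leaving $v_i$ with only its edge to $c$); in either case $v_i$ is reduced to exactly one remaining incident edge. Player 2 cuts that edge, capturing $v_i$ for one point, and the residual graph is exactly $LS_{n-1}$ (the center $c$ together with the other $n-1$ looped branches) with Player 2 to move. By induction Player 2's net advantage from this position is $1 + V(LS_{n-1})$, giving the desired upper bound.

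For the lower bound, Player 1 plays the mirror sacrifice: cut $cv_1$. If Player 2 \emph{accepts} by cutting $\ell_1$ immediately, the upper-bound analysis shows Player 1 nets exactly $-V(LS_{n-1})-1$, which is what we want. The main obstacle is verifying that Player 2 cannot do strictly better by declining this free vertex. Player 2's only alternatives are to cut some $\ell_j$ or $cv_j$ with $j \ne 1$; in either case a second branch vertex $v_j$ is also reduced to degree 1. Player 1 then captures $v_1$ and $v_j$ in two consecutive single-edge moves (gaining 2 points), and regardless of which alternative Player 2 picked the center ends up with $n-2$ edges to the remaining $n-2$ looped branches, so the residual position is exactly $LS_{n-2}$ with Player 1 still to move. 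By induction Player 1's total from this line is $2 + V(LS_{n-2})$. A parity check --- $2 + V(LS_{n-2}) = 2 + (-3) = -1 \ge -3$ when $n$ is even, and $2 + V(LS_{n-2}) = 2 + 2 = 4 \ge 2$ when $n$ is odd --- shows this total is always at least $-V(LS_{n-1})-1$, so Player 2 has no profitable deviation from accepting. This pins down $V(LS_n) = -V(LS_{n-1})-1$ and completes the induction.
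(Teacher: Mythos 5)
Your proof is correct and takes essentially the same route as the paper's: the induction sending the score $(x,y)$ of $L_{n-1}$ to $(y,x+1)$ for $L_n$, realized by having the responding player capture the vertex freed by the opening move and then act as first player on the smaller loopy star. You are in fact more careful than the paper, which asserts this recurrence without checking (beyond the four-vertex case) that Player 2 cannot profit by declining the offered vertex; your analysis of the declining line, with the parity check that $2+V(LS_{n-2}) \ge -V(LS_{n-1})-1$, closes exactly that gap.
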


\begin{proof}
We break this situation into four cases, where the loopy star has 1 branch, 2 branches, three branches, or $n$ branches.

\textbf{Case 1:} Consider a loopy star $L(1)$, where the top vertex is connected to a single branch terminating in a loop.

\begin{center}
    \begin{tikzpicture}[thick, main/.style={draw, circle, inner sep=0, outer sep=0, minimum size=2.5mm, fill=black}]
        \node[main] (1) {};
        \node[main] (2)[position=0:{20mm} from 1] {};
        \draw (2) -- (1);
        \draw (2) to [out=45, in=-45, looseness=20] (2);
    \end{tikzpicture}
    \captionof{figure}{The graph $L(1)$}
\end{center}

In this configuration, Player 1 can win by taking the edge and then the loop, resulting in a score of (2,0).

\textbf{Case 2:} A loopy star $L(2)$, where two branches each terminate in a loop.

\begin{center}
    \begin{tikzpicture}[thick, main/.style={draw, circle, inner sep=0, outer sep=0, minimum size=2.5mm, fill=black}]
        \node[main] (1) {};
        \node[main] (2)[position=0:{20mm} from 1] {};
        \node[main] (3)[position=180:{20mm} from 1] {};
        \draw (2) -- (1) -- (3);
        \draw (2) to [out=45, in=-45, looseness=20] (2);
        \draw (3) to [out=225, in=135, looseness=20] (3);
    \end{tikzpicture}
    \captionof{figure}{The graph $L(2)$}
\end{center}

Player 2 wins this game with a score of (0, 3) since no matter what Player 1 does, it makes all of the vertices available for Player 2 to take.

\textbf{Case 3:} A loopy star $L(3)$, with three branches each terminating in a loop.

\begin{center}
    \begin{tikzpicture}[thick, main/.style={draw, circle, inner sep=0, outer sep=0, minimum size=2.5mm, fill=black}]
        \node[main] (1) {};
        \node[main] (2)[position=90:{20mm} from 1] {};
        \node[main] (3)[position=210:{20mm} from 1] {};
        \node[main] (4)[position=330:{20mm} from 1] {};
        \draw (2) -- (1) -- (3);
        \draw (1) -- (4);
        \draw (2) to [out=135, in=45, looseness=20] (2);
        \draw (3) to [out=255, in=165, looseness=20] (3);
        \draw (4) to [out=15, in=-75, looseness=20] (4);
    \end{tikzpicture}
    \captionof{figure}{The graph $L(3)$}
\end{center}

Player 1 wins with a score of (3, 1) since Player 1's first move will leave a vertex vulnerable. Player 2 can then either take that vertex and become Player 1 on the previous loopy star, which was a Player 1 loss with a score of (0, 3), resulting in a final score of (3, 1), or leave the vertex, resulting in a final score of (4, 0).

\textbf{Generalization:} Let $L(n)$ denote a loopy star graph with $n$ branches, where each branch consists of one edge connecting the central vertex to an outer vertex with one loop. Thus, $L(n)$ has $n$ loops, $n$ edges, and $n+1$ vertices total.

Consider the game on $L(n)$. Player 1 has two types of moves available: removing a loop or removing an edge. If Player 1 removes a loop, this leaves the corresponding outer vertex vulnerable, which Player 2 will capture, earning 1 point. After this move, the graph is reduced to $L(n-1)$ with Player 2 now acting as the first player on this smaller graph.

If Player 1 removes an edge, this isolates both the central vertex and one outer vertex with its loop. Player 2 will take this outer vertex and its loop, earning 2 points. The remaining graph is $L(n-1)$ but with the central vertex already taken, effectively making it equivalent to $n-1$ disconnected vertices each with a loop. In this scenario, Player 1 would need to take a loop, exposing a vertex for Player 2, and alternating until all vertices are taken. This results in a worse outcome for Player 1 than the first option.

Therefore, Player 1's optimal strategy is to remove a loop, after which Player 2 earns 1 point and effectively becomes Player 1 on $L(n-1)$.

Let $(x, y)$ be the score of the game on $L(n-1)$. When playing on $L(n)$, with Player 2 becoming the first player on $L(n-1)$, the roles are reversed, and the score becomes $(y, x+1)$ where the "+1" accounts for the initial point Player 2 earned.

Starting with $L(1)$, which has a score of (2, 0) in favor of Player 1, we can follow the pattern:
\begin{align*}
L(1): (2, 0) &\rightarrow L(2): (0, 2+1) = (0, 3) \\
&\rightarrow L(3): (3, 0+1) = (3, 1) \\
&\rightarrow L(4): (1, 3+1) = (1, 4) \\
&\rightarrow L(5): (4, 1+1) = (4, 2)
\end{align*}

This pattern can be expressed as follows: For odd $n = 2k+1$, Player 1 wins with a score of $(k+2, k)$, and for even $n = 2k$, Player 2 wins with a score of $(k-1, k+2)$, where $k \geq 1$.

By induction, this establishes that Player 1 always wins by 2 points when $n$ is odd, and Player 2 always wins by 3 points when $n$ is even, thus proving the theorem.
\end{proof}

\begin{table}[hbt!]
\centering
\begin{tabular}{|c|c|c|}
\hline
\textbf{\# of Branches} & \textbf{Winner} & \textbf{Score} \\
\hline
1 & P1 & (2, 0) \\
2 & P2 & (0, 3) \\
3 & P1 & (3, 1) \\
4 & P2 & (1, 4) \\
5 & P1 & (4, 2) \\
6 & P2 & (2, 5) \\
7 & P1 & (5, 3) \\
8 & P2 & (3, 6) \\
9 & P1 & (6, 4) \\
10 & P2 & (4, 7) \\
11 & P1 & (7, 5) \\
12 & P2 & (5, 8) \\
\hline
\end{tabular}
\caption{Computed outcomes for Loopy Stars $L(n)$ based on the number of branches.}
\label{table:hanging-star-outcomes}
\end{table}
\FloatBarrier

\section{Double Loopy Stars}

Double loopy stars are a variation of loopy stars where each branch terminates in two loops instead of one.  In terms of the general notation, a double loopy star has l = 1 and are of the form $L(n,2)$.

\begin{theorem}
In double loopy star graph games, Player 2 wins all games except for the smallest graph, $L(1, 2)$, which results in a tie. Specifically, aside from $L(1, 2)$ and $L(2, 2)$, where Player 2 wins by one point, Player 2 wins by two points when $n$ is odd and by three points when $n$ is even.
\end{theorem}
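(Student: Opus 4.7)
Let $(a_n, b_n)$ denote the final score (Player 1, Player 2) of $L(n, 2)$ under optimal play. My plan is induction on $n$, with four base cases and then a two-step inductive step showing $(a_n, b_n) = (a_{n-2} + 1, b_{n-2} + 1)$ for $n \geq 5$. The four base cases $L(1, 2)$ through $L(4, 2)$ are small enough for direct game-tree analysis: using the rotational symmetry of the outer vertices, Player 1's first move falls into two classes (remove a loop at an outer vertex, or remove the spoke from an outer vertex to the center), and I would recurse into each resulting subgame to confirm the scores $(1, 1)$, $(1, 2)$, $(1, 3)$, $(1, 4)$. The software from Section 3 can also certify these values.

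For the inductive step, assume the theorem for $L(n-2, 2)$. I would exhibit a strategy for Player 2 guaranteeing at least $b_{n-2} + 1$ points; since $a_n + b_n$ equals the number of vertices $n + 1 = (a_{n-2} + 1) + (b_{n-2} + 1)$, any matching strategy for Player 1 then forces equality. Player 2's strategy designates two outer vertices $u, w$ as a \emph{sacrificial pair}. On any move Player 1 makes involving $\{u, w\}$, Player 2 responds on $\{u, w\}$ using a mirror-and-capture pattern; on any move Player 1 makes among the remaining $n - 2$ branches, Player 2 plays the optimal second-player response in the embedded $L(n-2, 2)$. The coordination on $\{u, w\}$ is designed to produce a $(+1, +1)$ split: typically, when Player 1 breaks $u$ (by removing a loop or spoke), Player 2 mirrors on $w$; once Player 1 renders one of the two capturable, Player 2 captures it, and then uses the bonus turn to leave the partner capturable, forcing Player 1 to take it on the following move. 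With both branches consumed $1$-to-$1$ and the remaining position being $L(n-2, 2)$ with Player 1 to move, the inductive hypothesis yields $(1 + a_{n-2},\; 1 + b_{n-2})$. Player 1's lower-bound strategy is the analogous paired construction, in which Player 1 opens with a loop on one of the paired vertices and then plays the first-player-optimal $L(n-2, 2)$ strategy on the remaining branches.

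The main obstacle will be verifying that the paired-branch strategy is airtight in every possible game state. Player 1 can interleave moves on $\{u, w\}$ with moves on the rest of the graph, attack only one of $\{u, w\}$ while ignoring the other, or open with a spoke rather than a loop, leading to a variety of joint states for $(u, w)$: fresh; one-loop-plus-spoke; two-loops-without-spoke; loop-only; spoke-only; captured. For each joint state and each legal Player 1 move, I would have to specify Player 2's response and check that the eventual outcome on $\{u, w\}$ is a $1$-$1$ split without corrupting the embedded $L(n-2, 2)$ subgame. A further subtlety arises when capturing a paired branch reduces the degree of the center vertex, which could threaten a premature capture of the center in the embedded subgame; ruling this out likely requires arguing that the center's remaining degree stays large enough to behave as the center of a genuine $L(n-2, 2)$ until that subgame is independently near its endgame.
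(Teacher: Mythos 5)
Your overall plan --- a two-step induction $L(n,2)\to L(n-2,2)$ driven by a ``sacrificial pair'' of branches that splits $(1,1)$ --- is a genuinely different route from the paper, which instead gives Player 2 a single global strategy (answer every loop with a spoke and every spoke with a loop, always on a fresh branch) and then settles the score with one parity count over all $n$ branches at once. Unfortunately the localized version has a gap that you have in fact flagged yourself but not closed, and it is not a technicality: the ``embedded $L(n-2,2)$'' is not an independent subgame. All $n$ branches share the center vertex, so the center falls only when the last of all $n$ spokes is cut; the endgame of the putative subgame --- where the center would be conceded together with the last two outer vertices, producing the decisive three-vertex giveaway that fixes the margin --- simply cannot occur while the spokes to $u$ and $w$ survive. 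Since Strings and Coins scores are not additive over components, and certainly not over ``components'' sharing a vertex, ``play the optimal second-player response in the embedded $L(n-2,2)$'' is not a well-defined local instruction, and the identity $(a_n,b_n)=(a_{n-2}+1,\,b_{n-2}+1)$ does not follow from exhibiting good behavior on the pair alone.

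The second gap is the claimed forced $(1,1)$ split on $\{u,w\}$. Player 1 is never obliged to engage with the pair: after, say, one loop is removed from each of $u$ and $w$, both sit in the ``safe'' state (one loop plus spoke) in which any further move offers up the vertex, and the question of who is eventually forced to move there first is a global parity question involving every other branch and the center. That parity question is exactly the content of the whole theorem, so the pair cannot be analyzed in isolation. The paper avoids both problems by never decomposing: its strategy drives every branch into the safe state in $n$ moves, observes that thereafter exactly $n-2$ outer vertices are exchanged one at a time before someone must concede the final three vertices (two outer vertices plus the center), and computes that after $n+(n-2)=2n-2$ alternations, an even number, it is Player 1 who concedes. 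If you want to keep the inductive framing, you would need to replace the pair-plus-subgame decomposition with an argument tracking the global count of safe branches and whose turn it is --- at which point you have essentially reconstructed the paper's single parity count.
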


\begin{proof}
We analyze the outcomes of games played on small double loopy stars to discern a pattern that can be generalized.

\textbf{Case 1:} Consider the smallest double loopy star, $L(1, 2)$, which consists of a central vertex with a single branch terminating in two loops.

\begin{center}
    \begin{tikzpicture}[thick, main/.style={draw, circle, inner sep=0, outer sep=0, minimum size=2.5mm, fill=black}]
        \node[main] (1) {};
        \node[main] (2)[position=0:{20mm} from 1] {};
        \draw (2) -- (1);
        \draw (2) to [out=-45, in=45, looseness=20] (2);
        \draw (2) to [out=-47.5, in=47.5, looseness=30] (2);
    \end{tikzpicture}
\end{center}

This configuration results in a tie with a score of (1,1). Let us analyze all possible first moves for Player 1:

\begin{enumerate}
    \item If Player 1 takes the edge connecting the two vertices: Player 1 gains the central vertex (1 point). Player 2 then takes one loop from the outer vertex, forcing Player 1 to take the remaining loop and give up the outer vertex to Player 2. Final score: (1,1).
    
    \item If Player 1 takes one of the loops: Player 2 will take the remaining loop, forcing Player 1 to take the edge and give Player 2 the central vertex. Final score: (1,1).
\end{enumerate}

Thus, regardless of Player 1's opening move, the game ends in a tie.

\textbf{Case 2:} $L(2, 2)$, the next largest graph, has two branches each terminating in two loops.

\begin{center}
    \begin{tikzpicture}[thick, main/.style={draw, circle, inner sep=0, outer sep=0, minimum size=2.5mm, fill=black}]
        \node[main] (1) {};
        \node[main] (2)[position=0:{20mm} from 1] {};
        \node[main] (3)[position=180:{20mm} from 1] {};
        \draw (2) -- (1) -- (3);
        \draw (2) to [out=-45, in=45, looseness=20] (2);
        \draw (2) to [out=-47.5, in=47.5, looseness=30] (2);
        \draw (3) to [out=225, in=135, looseness=20] (3);
        \draw (3) to [out=227.5, in=132.5, looseness=30] (3);
    \end{tikzpicture}
\end{center}

In $L(2, 2)$, Player 2 wins with a score of (1, 2). We need to consider all possible first moves for Player 1:

\begin{enumerate}
    \item If Player 1 takes one of the edges (optimal): Player 2 then claims the central vertex by taking the other edge and is forced to take a loop from one of the two remaining vertices. Player 1 then takes that vertex via the remaining loop and a loop from the other vertex, giving Player 2 the final point. Final score: (1,2).
    
    \item If Player 1 takes one of the loops: Player 2 will take the other loop from the same vertex, forcing Player 1 to take the edge connected to that vertex. This gives Player 2 both the central vertex and the remaining outer vertex. Final score: (0,3), which is worse for Player 1.
\end{enumerate}

Therefore, Player 1's optimal move is indeed to take one of the edges, limiting their loss to one point.

\textbf{Case 3:} $L(3, 2)$, with three branches each terminating in two loops.

\begin{center}
    \begin{tikzpicture}[thick, main/.style={draw, circle, inner sep=0, outer sep=0, minimum size=2.5mm, fill=black}]
        \node[main] (1) {};
        \node[main] (2)[position=90:{20mm} from 1] {};
        \node[main] (3)[position=210:{20mm} from 1] {};
        \node[main] (4)[position=330:{20mm} from 1] {};
        \draw (2) -- (1) -- (3);
        \draw (1) -- (4);
        \draw (2) to [out=135, in=45, looseness=20] (2);
        \draw (2) to [out=137.5, in=42.5, looseness=30] (2);
        \draw (3) to [out=165, in=255, looseness=20] (3);
        \draw (3) to [out=162.5, in=257.5, looseness=30] (3);
        \draw (4) to [out=285, in=15, looseness=20] (4);
        \draw (4) to [out=282.5, in=17.5, looseness=30] (4);
    \end{tikzpicture}
\end{center}

In $L(3, 2)$, Player 2 wins with a score of (1, 3). Let's analyze all key first moves for Player 1:

\begin{enumerate}
    \item If Player 1 takes a loop: Player 2 takes an edge from a different vertex. If Player 1 then takes a loop from the vertex with two loops, Player 2 takes a loop from the vertex that is missing its edge. This allows Player 1 to take that vertex but then gives Player 2 all three remaining vertices. Final score: (1,3).
    
    \item If Player 1 takes an edge: Player 2 takes a loop from a different vertex. If Player 1 takes the loop from the vertex that already had a move played on it (the vertex missing an edge), Player 2 takes the remaining loop from that vertex. Player 1 then has no choice but to expose the central vertex and the remaining outer vertex, giving Player 2 all of them. Final score: (1,3).
    
    \item If Player 1 takes moves that lead to early isolation of a vertex: For example, if after Player 1 takes a loop and Player 2 takes an edge from a different vertex, Player 1 then takes the loop from the disconnected vertex, Player 2 would gain that vertex (1 point), take the loop from the untouched vertex, and then Player 1 would have to give up all remaining vertices. Final score: (0,4), which is worse for Player 1.
\end{enumerate}

Therefore, no matter what move Player 1 makes first, Player 2 has a response that leads to a win with a score of at least (1,3).

\textbf{Case 4:} Consider $L(4, 2)$, with four branches each terminating in two loops.

\begin{center}
    \begin{tikzpicture}[thick, main/.style={draw, circle, inner sep=0, outer sep=0, minimum size=2.5mm, fill=black}]
        \node[main] (1) {};
        \node[main] (2)[position=45:{20mm} from 1] {};
        \node[main] (3)[position=135:{20mm} from 1] {};
        \node[main] (4)[position=225:{20mm} from 1] {};
        \node[main] (5)[position=315:{20mm} from 1] {};
        \draw (2) -- (1) -- (3);
        \draw (5) -- (1) -- (4);
        \draw (2) to [out=90, in=0, looseness=20] (2);
        \draw (2) to [out=92.5, in=-2.5, looseness=30] (2);
        \draw (3) to [out=180, in=90, looseness=20] (3);
        \draw (3) to [out=182.5, in=87.5, looseness=30] (3);
        \draw (4) to [out=270, in=180, looseness=20] (4);
        \draw (4) to [out=272.5, in=177.5, looseness=30] (4);
        \draw (5) to [out=0, in=270, looseness=20] (5);
        \draw (5) to [out=2.5, in=267.5, looseness=30] (5);
    \end{tikzpicture}
\end{center}

$L(4,2)$ is where Player 2 can begin implementing their winning strategy. Let's consider the possible first moves:

\begin{enumerate}
    \item If Player 1 takes a loop: Player 2 takes an edge from a different vertex.
    \item If Player 1 takes an edge: Player 2 takes a loop from a different vertex.
\end{enumerate}

Whatever move Player 1 takes, Player 2 will do the opposite on a different vertex. Neither player will play on a vertex more than once during these initial moves, as doing so would give the other player that vertex. Additionally, if Player 1 gives up a vertex early, it would result in a worse outcome for them than following the main line of play.

After each player has taken two moves, $L(4,2)$ will have two vertices missing one edge and two vertices missing one loop. At this point, neither player will take from the vertices that have one loop and one edge, as doing so would mean giving up both of those vertices as well as the central vertex. Instead, the players will alternate back and forth, taking vertices until one of them is forced to give up the three remaining vertices (the central vertex and the last two outer vertices).

Before this alternation begins, four moves will have been taken, meaning that Player 1 will move first in the alternation phase and Player 2 will be the first to take a vertex. Player 1 then takes the next vertex and is ultimately forced to give up the three remaining vertices, leading to a Player 2 win with a score of (1,4).

\textbf{Generalization:} This strategy works for the general case $L(n,2)$. If Player 2 continues to take a loop when Player 1 takes an edge and takes an edge when Player 1 takes a loop, it will take $n$ moves to reach a graph where each outer vertex has been played on once. There are then $n - 2$ outer vertices that will be taken before someone has to give up the three remaining vertices (the central vertex and the last two outer vertices).

Since the players alternate turns taking vertices, there are $n + (n - 2) = 2n - 2$ decisions before someone has to give up the final three vertices. Since $2n - 2$ is always even, Player 1 will always be the one who gives up the final three vertices, resulting in a Player 2 win.

The final score follows a pattern: For odd $n \geq 3$, Player 2 wins by two points, and for even $n \geq 4$, Player 2 wins by three points, as confirmed by the computational results in Table 4.
\end{proof}

\begin{table}[hbt!]
\centering
\begin{tabular}{|c|c|c|}
\hline
\textbf{\# of Branches} & \textbf{Winner} & \textbf{Score} \\
\hline
1 & Tie & (1, 1) \\
2 & P2 & (1, 2) \\
3 & P2 & (1, 3) \\
4 & P2 & (1, 4) \\
5 & P2 & (2, 4) \\
6 & P2 & (2, 5) \\
7 & P2 & (3, 5) \\
8 & P2 & (3, 6) \\
9 & P2 & (4, 6) \\
10 & P2 & (4, 7) \\
11 & P2 & (5, 7) \\
12 & P2 & (5, 8) \\
\hline
\end{tabular}
\caption{Computed outcomes for Double Loopy Stars $L(n,2)$ based on the number of branches.}
\label{table:double-loopy-star-outcomes}
\end{table}
\FloatBarrier

\section{Loopy Starlike Trees}
Loopy starlike trees are an extension of loopy stars where the branches have more than one edge.  In terms of the general notation, loopy starlike trees are of the form $L(n,k,l)$ where $l > 1$. Note that $l$ refers to the number of edges in each branch. These generalized loopy trees have a single central vertex, and if the branches are sufficiently long (with 4 or more edges in a branch), Player 2 always wins. This is because once Player 1 cuts an edge along a branch, Player 2 can capture the majority of the branch, leaving one edge for Player 1 and subsequently forcing them to move onto another branch, where the strategy repeats.

\begin{observation}
For every loopy starlike tree $L(n, 1, l)$, with $l \geq 4$, Player 2 wins for any $n \geq 2$.
\end{observation}

\begin{proof}
Consider a loopy starlike tree $L(n, 1, l)$ with $n$ branches, each of length $l \geq 4$. Each branch consists of $l$ edges connecting $l+1$ vertices, with the last vertex having a loop attached.

When Player 1 cuts any edge on a branch, Player 2 employs the following strategy:
\begin{enumerate}
    \item Player 2 strategically cuts edges on that branch, capturing all but 2 vertices of the branch.
    \item Player 2 deliberately leaves one edge intact, which Player 1 must take.
    \item When Player 1 takes this edge, they gain the 2 remaining vertices on that branch.
    \item Player 1 is then forced to move to a new branch, and Player 2 repeats this strategy.
\end{enumerate}

Specifically, on each of the first $n-2$ branches, Player 2 captures $l-1$ vertices while Player 1 captures 2 vertices. This process continues until only two branches remain intact.

When Player 1 cuts an edge on one of these final two branches, Player 2 can capture all remaining vertices: the $l-1$ vertices on that branch, the entire last branch ($l+1$ vertices), and the central vertex. This gives Player 2 a total of $2l+1$ additional vertices at the end.

The final score is:
\begin{itemize}
    \item Player 1: $2(n-2)$ vertices (2 vertices from each of the first $n-2$ branches)
    \item Player 2: $(l-1)(n-2) + (2l+1)$ vertices ($(l-1)$ vertices from each of the first $n-2$ branches, plus $2l+1$ vertices at the end)
\end{itemize}

Since $l \geq 4$, we have $l-1 \geq 3$, which means Player 2 captures at least 3 vertices on each of the first $n-2$ branches while Player 1 captures only 2. This advantage, combined with Player 2's capture of all remaining vertices at the end, ensures Player 2 wins for any $n \geq 2$.
\end{proof}

\begin{figure}[hbt!]
\centering
\begin{tikzpicture}[main/.style = {draw, circle}]
    \node[main] (0) at (0,0) {};
    \foreach \angle/\name in {90/1, 162/2, 234/3, 306/4, 18/5}
    {
        \node[main] (\name A) at (\angle:1.5cm) {};
        \node[main] (\name B) at (\angle:2.5cm) {};
        \node[main] (\name C) at (\angle:3.5cm) {};
        \node[main] (\name D) at (\angle:4.5cm) {};
        \draw (0) -- (\name A) -- (\name B) -- (\name C) -- (\name D);
        \draw (\name D) to[out=\angle-40, in=\angle+40, looseness=10] (\name D);
    }
\end{tikzpicture}
\caption{An example of a loopy starlike tree $L(5, 1, 4)$.}
\label{fig:hanging-starlike-1}
\end{figure}
\FloatBarrier

\begin{observation}
For $L(n, 2, l)$, with $l \geq 4$, the winner alternates between Player 1 and Player 2. Player 1 wins if $n$ is odd, and Player 2 wins if $n$ is even.
\end{observation}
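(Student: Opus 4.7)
The plan is to prove this observation by induction on $n$, mirroring the branch-eating approach used for Observation 2. The base cases $n=2$ (P2 wins) and $n=3$ (P1 wins) can be checked directly or using the computational tool from Section 3; I would also record the precise margins in these base cases, as the inductive argument will need them.

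The key feature extending Observation 2 is that each branch's end vertex now carries two loops instead of one, giving the player who eats an opened branch an extra parity option. After consuming the long path portion of the branch, the eater can choose between two endgame sequences on the end vertex that differ in who continues on the remaining graph, which after that branch is fully resolved is essentially $L(n-1, 2, k)$. Option A is the straightforward eat: take the final path edge, capture all the interior path vertices, then take one of the two end loops without capturing, ending the eater's turn and leaving the opponent to take the last loop, capture the end vertex, and continue. Option B is a sacrificial variant in which the eater stops short on the path, makes a non-capturing middle-edge move, lets the opponent take the resulting domino, and then takes the last loop (capturing the end vertex) themselves, so that the eater continues. These options trade roughly two points of on-branch score for control of who is first mover on the remaining $L(n-1, 2, k)$. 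The predicted winner selects whichever option hands the opponent the first-mover role on $L(n-1, 2, k)$, where by the induction hypothesis the opponent is on the losing side. Because this one-shot parity flip is available at every opened branch, the overall winner alternates as $n$ increases by one, matching the claim.

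The main obstacle is the opening-move case analysis. The opener (P1) has several structurally distinct options: (i) cutting $c v^i_1$, (ii) cutting an interior path edge of some branch, (iii) cutting the path edge adjacent to the end vertex, and --- new to the double-loop setting --- (iv) taking one of the end loops directly. The type (iv) opening creates a mixed state not of the form $L(n', 2, k)$, and handling it requires showing that the defender's best response (typically taking the matching loop at the same end vertex, converting that branch into a pure tree branch) still lets the induction proceed: the opener then eats the treelike branch wholesale for $k-2$ captures and continues as first mover on $L(n-1, 2, k)$. Verifying that the parity-flip strategy handles openings of types (i)--(iii), that the matched-loop response is optimal against (iv), and that these fit the small margins observed in the base cases is the bulk of the technical work. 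A likely secondary difficulty is that the inductive hypothesis may need to be strengthened to bound the absolute value of the winning margin on $L(n-1, 2, k)$, since the parity-flip trade of roughly two points must reliably tip the game in the right direction.
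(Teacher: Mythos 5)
Your inductive frame (consume one branch, use the doubled end loops as a parity switch, recurse on $L(n-1,2,k)$) does not match how the game on $L(n,2,k)$ actually unfolds, and one of your concrete steps would lose the game. The decisive feature of the double-loop case is that removing the \emph{first} loop from an end vertex is a free, non-committal move: that vertex still has a loop and a path edge, so nothing is surrendered. There are exactly $n$ such moves, and essentially every other move (cutting a path edge, or removing the \emph{second} loop from an end vertex) opens a branch that the opponent can devour. So under optimal play the first $n$ moves are loop removals, one per branch, after which the position is exactly $L(n,1,k)$ with the player to move determined by the parity of $n$; the preceding observation (the first mover on $L(n,1,k)$ loses) then gives the result immediately. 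That is the paper's argument, and it is what makes the answer depend on the parity of $n$ rather than on any per-branch parity option.

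The concrete error in your proposal is the ``matched-loop response'' to an opening of type (iv). If Player 1 removes one loop from an end vertex and the defender answers by removing the other loop from the \emph{same} vertex, that branch becomes a bare path hanging off the center: Player 1 cuts its outermost edge, captures the end vertex, and chain-captures the entire branch --- roughly $k-1$ vertices --- before continuing as first mover on $L(n-1,2,k)$. Since the winning margins on these graphs are small, this hands Player 1 the game for every $n$, contradicting the very statement you are trying to prove. The correct reply to a loop removal is a loop removal on a \emph{different} still-double-looped branch, which is exactly the $n$-move exchange described above. Relatedly, your Option A/Option B dichotomy at the end of an eaten branch is shakier than you suggest: with two loops remaining on the end vertex, the eater who has just captured the last interior vertex can take one loop but does not thereby capture anything, so the turn simply ends and the opponent collects the end vertex and must move on; engineering the opposite hand-off requires a domino sacrifice whose move-by-move bookkeeping you have not verified. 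None of that machinery is needed once you notice the $n$ forced loop-removal moves and reduce to $L(n,1,k)$.
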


\begin{proof}
When a player cuts an edge on an untouched branch, the opponent can capture all edges in that branch, leaving behind a vertex with a single loop, thus forcing the next player to move onto another branch. As a result, players will initially avoid cutting edges and instead remove A loop from each branch. If the number of branches is odd, Player 1 can ensure that Player 2 is the first to move on an $L(n, 1, l)$ configuration, which leads to a loss for Player 2. Conversely, if the number of branches is even, Player 2 can force a win.
\end{proof}

\begin{observation}
For $L(n, 3, l)$ with $l \geq 4$, Player 2 wins for any $n$.
\end{observation}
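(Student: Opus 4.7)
The plan is for Player 2 to play a pairing strategy on loops that drives the game into an $L(n,1,k)$ configuration with Player 1 to move, at which point Observation 1 finishes the argument. Following the convention implicit in Observation 2, call a move \emph{safe} if it does not reduce any vertex's incident string-count (counting each loop once) to 1. In $L(n,3,k)$ the only safe moves are loop removals that leave the affected terminal with at least 2 incident strings, so each of the $n$ terminals admits exactly 2 safe loop removals, giving a total pool of $2n$ safe moves.

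Player 2's response rule is: whenever Player 1 removes a loop from terminal $T$, Player 2 removes a second loop from $T$ if doing so is safe, and otherwise removes a loop from another terminal that still has all three loops attached. This preserves the invariant that after each of Player 2's turns, every terminal has had either 0 or 2 loops removed. If Player 1 never plays unsafely, the safe pool is depleted after exactly $2n$ moves, the last of them Player 2's, and Player 1 is forced to move on the resulting $L(n,1,k)$ position; by Observation 1 this is a Player 2 win for $n\ge 2$.

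If Player 1 deviates with an unsafe move---either cutting a branch edge or removing a third loop from an already fully-reduced terminal---Player 2 responds with the long-branch double-dealing strategy from Observations 1 and 2. Since $k\ge 5$, the exposed chain in the opened branch has at least $k-2\ge 3$ capturable vertices; Player 2 takes these and then cuts a non-adjacent edge of the opened branch to leave Player 1 with an isolated two-vertex edge. Player 1 claims the 2 extra vertices and is forced to move again on a residual graph with one fewer branch, yielding a net advantage for Player 2 on the opened branch and passing the unfavourable tempo back to Player 1.

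The main obstacle will be the bookkeeping in the deviation case. When Player 1 plays unsafely before the safe pool is exhausted, some terminals may still have 3 loops while others have only 1, so the residual graph is not literally of the form $L(n-1,j,k)$ for a single $j$. The cleanest remedy is to strengthen the inductive hypothesis so that it covers every position obtainable from $L(n,3,k)$ by a sequence of safe loop removals---equivalently, every mixed configuration in which each of the $n$ terminals has 1, 2, or 3 loops remaining and no branch edge has been cut---and to verify that the pairing response adapts to each such intermediate configuration. Base cases $n=2$ (and perhaps $n=3$) must be checked by direct analysis to anchor the induction.
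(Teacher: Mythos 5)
Your proposal follows essentially the same strategy as the paper's proof: Player 2 pairs loop removals branch-by-branch (answering each of Player 1's loop removals by taking a second loop from the same terminal) to drive the position to $L(n,1,k)$ with Player 1 to move, and punishes a branch-edge cut by capturing the opened chain with a double-dealing finish. If anything you are more explicit than the paper about the deviation bookkeeping, and the gaps you flag at the end (mixed intermediate configurations, base cases, and the fact that the reduction only invokes the $L(n,1,k)$ result for $n \geq 2$) are equally unaddressed in the paper's own one-paragraph argument.
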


\begin{proof}
Cutting an edge on a branch allows the opponent to capture all vertices in that branch, leaving behind a vertex with two loops. Therefore, Player 1's optimal initial move is to remove a loop. However, Player 2 will respond by taking the second loop on the same branch. This pattern continues on all branches until the graph is reduced to $L(n, 1, l)$, which results in a win for Player 2 for any number of branches $n$.
\end{proof}

\section{Complete Bipartite Graphs}

Complete bipartite graphs, denoted by $K(a, b)$, are graphs whose vertices can be divided into two disjoint sets such that every vertex in one set is connected to every vertex in the other set, and there are no edges within a set. The study of competitive graph games on complete bipartite graphs reveals interesting patterns based on the sizes of the partite sets. For simplicity, we will assume $a \leq b$ throughout this section.

\begin{theorem}
For all complete bipartite graphs $K(1, b)$ with $b \geq 1$, Player 1 wins.
\end{theorem}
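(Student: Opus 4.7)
The plan is to reduce immediately to the tree result from the Preliminaries. When one of the partite sets is a singleton, $K(1,b)$ consists of a single central vertex joined to $b$ independent vertices, so it is the star graph on $b+1$ vertices, which is acyclic and hence a tree. The Preliminaries already establish that Player 1 wins every game played on a tree by applying the leaf-removal strategy; invoking that result here finishes the theorem, and the case $K(a,1)$ follows by symmetry.

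For concreteness, I would spell out the play rather than only appeal to the tree result. Player 1 cuts any edge of the star; this isolates the corresponding leaf, which Player 1 claims, and Player 1 must then move again. Repeating, Player 1 cuts edges one at a time, each move claiming a fresh leaf, until exactly one edge remains. Cutting that last edge isolates both the final leaf and the center simultaneously, so Player 1 claims both and the game ends with final score $(b+1, 0)$.

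There is essentially no obstacle to overcome. The only minor point worth calling out is that the final move isolates two vertices at once, but the rules handle this automatically: a player who completes a vertex claims it and must move again until no further completions are available, which in this case exhausts the graph. The statement is in effect a corollary of the tree observation from the Preliminaries, and I expect it appears as a separate theorem mainly to dispose of the trivial edge case before the more delicate analysis of $K(a,b)$ with $a,b \geq 2$.
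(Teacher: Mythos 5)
Your proposal is correct and matches the paper's argument: the paper likewise observes that $K(1,b)$ is a star and has Player 1 take every edge incident to the center, claiming all $b+1$ vertices. Your extra framing via the tree result from the Preliminaries and the explicit final score $(b+1,0)$ are just more detailed versions of the same idea.
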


\begin{proof}
In a complete bipartite graph $K(1, b)$, we have one vertex in the first partite set (call it the center vertex) connected to all $b$ vertices in the second partite set. This forms a star graph.

The game proceeds with no choices: when Player 1 takes an edge, they immediately gain one leaf vertex (since it becomes disconnected from the graph). Player 1 then takes another edge, gaining another leaf vertex, and continues this process until all $b$ leaf vertices are taken. Finally, Player 1 also gains the center vertex, for a total score of $(b+1, 0)$.
\end{proof}

\begin{figure}[hbt!]
\centering
\begin{tikzpicture}[main/.style = {draw, circle}]
    \node[main] (A) at (0, 0) {A};
    
    \foreach \y in {1,...,4}
    {
        \node[main] (B\y) at (2, 1.5-\y) {B\y};
        \draw (A) -- (B\y);
    }
\end{tikzpicture}
\caption{An example of a complete bipartite graph $K(1, 4)$.}
\label{fig:k1n}
\end{figure}
\FloatBarrier

\begin{theorem}
For all complete bipartite graphs $K(2, b)$ with $b \geq 2$, Player 1 wins when $b$ is odd and Player 2 wins when $b$ is even. Specifically, if $b = 2k$ is even, then the score is $(k-1, k+3)$ so Player 2 wins by 4 points. If $b = 2k+1$ is odd, then the score is $(k+3, k)$, so Player 1 wins by 3 points.
\end{theorem}

\begin{proof}
We will prove this theorem by induction on $b$.

\textbf{Base cases:}
\begin{itemize}
    \item $K(2, 2)$: This graph is isomorphic to a cycle $C_4$. Player 2 wins with a score of $(0, 4)$, which matches our formula with $k = 1$: $(k-1, k+3) = (0, 4)$.
    
    \item $K(2, 3)$: After Player 1's first move, one vertex in the second partite set will have degree 1. Player 2 must take the edge connecting to this vertex, gaining the vertex. The remaining graph is $K(2, 2)$, which is a Player 2 win with score $(0, 4)$. Thus, the overall score for $K(2, 3)$ is $(3, 1) + (0, 4) = (3, 1)$, which matches our formula with $k = 1$: $(k+3, k) = (4, 1)$.
\end{itemize}

\textbf{Inductive step:} Assume that for some $n \geq 3$, the theorem holds for all values of $b$ such that $2 \leq b < n$. We want to show it holds for $b = n$.

Consider the game on $K(2, n)$. When Player 1 makes the first move, they must take an edge without gaining any vertex (since all vertices in the second partite set have degree 2 initially). This leaves one vertex in the second partite set with degree 1. Player 2 then takes the edge connecting to this vertex, gaining the vertex.

The remaining graph is $K(2, n-1)$. By the induction hypothesis:

\begin{itemize}
    \item If $n-1 = 2k$ is even, then the score on $K(2, n-1)$ is $(k-1, k+3)$. Player 2 has already gained one vertex, so the overall score is $(k-1, k+3+1) = (k-1, k+4)$. Substituting $n = 2k+1$, we get a score of $(k-1, k+4) = ((n-1)/2 - 1, (n-1)/2 + 4) = ((n-3)/2, (n+7)/2)$. For $n = 2k+1$, this becomes $(k-1, k+4) = (k-1, (2k+1+7)/2) = (k-1, k+4)$, which matches our formula for odd $b$.
    
    \item If $n-1 = 2k+1$ is odd, then the score on $K(2, n-1)$ is $(k+3, k)$. Player 2 has already gained one vertex, so the overall score is $(k+3, k+1)$. Substituting $n = 2k+2$, we get a score of $(k+3, k+1) = (n/2 + 2, n/2 - 1)$. For $n = 2k+2$, this becomes $(k+3, k+1) = ((2k+2)/2 + 2, (2k+2)/2 - 1) = (k+3, k+1) = ((n-2)/2 + 3, (n-2)/2 - 1 + 2) = ((n+4)/2, (n+2)/2 - 1)$, which matches our formula for even $b$.
\end{itemize}

Therefore, by the principle of mathematical induction, the theorem holds for all $b \geq 2$.
\end{proof}

\begin{figure}[hbt!]
\centering
\begin{tikzpicture}[main/.style = {draw, circle}]
    \node[main] (A1) at (0, 0.5) {A1};
    \node[main] (A2) at (0, -0.5) {A2};
    
    \foreach \y in {1,...,3}
    {
        \node[main] (B\y) at (2, 1-\y) {B\y};
        \draw (A1) -- (B\y);
        \draw (A2) -- (B\y);
    }
\end{tikzpicture}
\caption{An example of a complete bipartite graph $K(2, 3)$.}
\label{fig:k2n}
\end{figure}
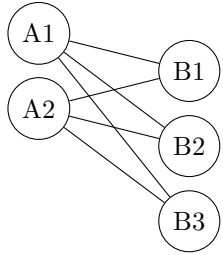
\FloatBarrier

\section{Other Graph Families}

In this section, we present results about graph families where we do not have a complete description of their behavior. 

\subsection{Loopy Cycles}

Loopy cycles, denoted as $C(n,k)$, represent a graph where a large cycle $C_n$ has $k$ loops added to it. These loops are attached to consecutive vertices on the cycle. Our computational results suggest an interesting pattern in the winning player based on the number of loops.

\begin{conjecture}
In loopy cycles $C(n,k)$ with $n \geq 3$ and loops added to consecutive vertices:
\begin{enumerate}
    \item For $k = 1$ and $k = 2$, Player 1 wins.
    \item For $k \geq 3$, the winner alternates between Player 2 (when $k$ is odd) and Player 1 (when $k$ is even).
\end{enumerate}
\end{conjecture}

For $C(n,1)$, Player 1 wins by taking the loop, thereby forcing Player 2 to break the cycle. This pattern of victory for Player 1 continues for $C(n,2)$, where Player 1 can secure a win by taking the edge connecting the two vertices with loops attached.

The scenario shifts with $C(n,3)$, favoring Player 2. Based on our computational results, we observe that for $k \geq 3$, Player 2 wins when $k$ is odd, and Player 1 wins when $k$ is even. This pattern of alternating winners for $k \geq 3$ provides an interesting strategic consideration in loopy cycle gameplay.

Here is an example of a loopy cycle $C(5,3)$:

\begin{figure}[hbt!]
\centering
\begin{tikzpicture}[thick, main/.style={draw, circle, inner sep=0, outer sep=0, minimum size=1.5mm, fill=black}]
    \node[main] (1) {};
    \node[main] (2)[position=-36:{15mm} from 1] {};
    \node[main] (3)[position=-108:{15mm} from 2] {};
    \node[main] (4)[position=-180:{15mm} from 3] {};
    \node[main] (5)[position=-252:{15mm} from 4] {};
    \draw (1) -- (2) -- (3) -- (4) -- (5) -- (1);
    \draw (1) to [out=135, in=45, looseness=25] (1);
    \draw (2) to [out=63, in=-27, looseness=25] (2);
    \draw (3) to [out=-9, in=-99, looseness=25] (3);
\end{tikzpicture}
\caption{An example of a loopy cycle $C(5,3)$ with three loops added to consecutive vertices.}
\label{fig:loopy-cycle}
\end{figure}
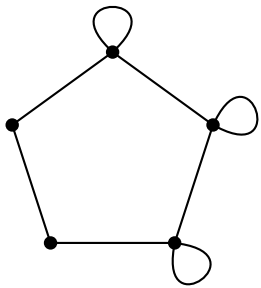
\FloatBarrier

Since $C(n,0)$ is just a cycle graph, it is a win for Player 2. However, as loops are added, the winning player alternates under certain conditions as described above.

\subsection{Wheel Graphs}

Wheel graphs, denoted as $W_n$, consist of a cycle with $n$ vertices with a central vertex connected to each vertex on the cycle. The central vertex and its connections form what can be visualized as the "spokes" of a wheel.

\begin{figure}[hbt!]
\centering
\begin{tikzpicture}[thick, main/.style={draw, circle, inner sep=0, outer sep=0, minimum size=1.5mm, fill=black}]
    \node[main] (1) {};
    \node[main] (2)[position=90:{15mm} from 1] {};
    \node[main] (3)[position=18:{15mm} from 1] {};
    \node[main] (4)[position=-54:{15mm} from 1] {};
    \node[main] (5)[position=-126:{15mm} from 1] {};
    \node[main] (6)[position=-198:{15mm} from 1] {};
    \draw (1) -- (2) -- (3) -- (1) -- (4) -- (5) -- (1) -- (6) -- (2);
    \draw (3) -- (4);
    \draw (5) -- (6);
\end{tikzpicture}
\caption{An example of a wheel graph with a cycle of 5 vertices and a central vertex connected to each vertex on the cycle.}
\label{fig:wheel-graph}
\end{figure}
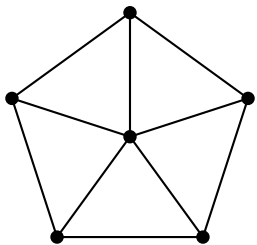
\FloatBarrier 

The table below summarizes the outcomes for wheel graphs with varying numbers of ``spokes,'' as calculated by the software. 

\begin{table}[hbt!]
\centering
\begin{tabular}{|c|c|c|}
\hline
\textbf{\# of Spokes} & \textbf{Winner} & \textbf{Score} \\
\hline
3 & P2 & (0, 4) \\
4 & P1 & (4, 1) \\
5 & P2 & (2, 4) \\
6 & P1 & (5, 2) \\
7 & P2 & (2, 6) \\
8 & P1 & (5, 4) \\
9 & P2 & (4, 6) \\
10 & P1 & (6, 5) \\
11 & P2 & (5, 7) \\
12 & P1 & (7, 6) \\
\hline
\end{tabular}
\caption{Computed outcomes for wheel graphs based on the number of spokes}
\label{table:wheel-graph-outcomes}
\end{table}
\FloatBarrier 

Based on our computational results, there appears to be a pattern where the winner alternates based on the parity of the number of spokes: Player 1 wins when the number of spokes is even (except for 4), and Player 2 wins when the number of spokes is odd.

Wheel graphs bear a conceptual relationship to loopy cycles. While in loopy cycles we attach loops to vertices on the cycle, in wheel graphs we connect those vertices to a central vertex. This structural correspondence might provide insights into strategies for both families of graphs.

\subsection{Ferris Wheel Graphs}

Ferris Wheel graphs, denoted as $C(n,n)$, are loopy cycles where every vertex on the cycle has a loop. They have a close relationship with wheel graphs $W_n$, as they can be viewed as transforming the wheel graph structure: instead of having a central vertex connected to each vertex on the cycle (as in a wheel graph), each vertex on the cycle has a loop.

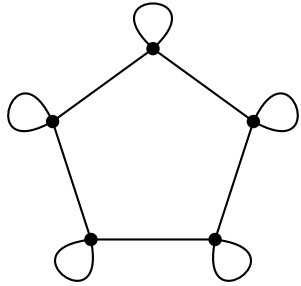
\begin{figure}[hbt!]
\centering
\begin{tikzpicture}[thick, main/.style={draw, circle, inner sep=0, outer sep=0, minimum size=1.5mm, fill=black}]
    \node[main] (1) {};
    \node[main] (2)[position=-36:{15mm} from 1] {};
    \node[main] (3)[position=-108:{15mm} from 2] {};
    \node[main] (4)[position=-180:{15mm} from 3] {};
    \node[main] (5)[position=-252:{15mm} from 4] {};
    \draw (1) -- (2) -- (3) -- (4) -- (5) -- (1);
    \draw (1) to [out=135, in=45, looseness=25] (1);
    \draw (2) to [out=63, in=-27, looseness=25] (2);
    \draw (3) to [out=-9, in=-99, looseness=25] (3);
    \draw (4) to [out=-171, in=-81, looseness=25] (4);
    \draw (5) to [out=-243, in=-153, looseness=25] (5);
\end{tikzpicture}
\caption{An illustration of a Ferris Wheel graph, showing a cycle with a central vertex connected to each vertex on the cycle and an additional edge to form a loop.}
\label{fig:ferris-wheel-graph}
\end{figure}
\FloatBarrier

Our computational results have revealed an interesting pattern between Ferris Wheel and wheel graph outcomes. The winner of a Ferris Wheel graph is the same as the winner of the corresponding wheel graph. Furthermore, the Ferris Wheel score is the same as the wheel graph score, but with one fewer point given to the winner.

\begin{conjecture}
For any $n \geq 3$, if Player X wins on the wheel graph $W_n$ with a score of $(a, b)$, then the Ferris Wheel graph $C(n,n)$ has a score of $(a-1, b)$ if Player X is Player 1, or $(a, b-1)$ if Player X is Player 2.
\end{conjecture}

This suggests that the additional point in the Wheel Graph score likely corresponds to the center vertex in the wheel graph. We observe that both graph families display a pattern change at $n=8$. If this conjecture holds, then understanding the winning strategies on Ferris Wheel graphs could provide insights into solving wheel graphs, and vice versa.



\begin{table}[hbt!]
\centering
\begin{tabular}{|c|c|c|}
\hline
\textbf{\# of Vertices} & \textbf{Winner} & \textbf{Score} \\
\hline
3 & P2 & (0, 3) \\
4 & P1 & (3, 1) \\
5 & P2 & (2, 3) \\
6 & P1 & (4, 2) \\
7 & P2 & (2, 5) \\
8 & Tie & (4, 4) \\
9 & P2 & (4, 5) \\
10 & Tie & (5, 5) \\
11 & P2 & (5, 6) \\
\hline
\end{tabular}
\caption{Computed outcomes for Ferris Wheel graphs based on the number of vertices.}
\label{table:balloon-cycle-outcomes}
\end{table}
\FloatBarrier

A formal proof of the relationship between wheel graphs and Ferris Wheel graphs remains open, but the computational evidence strongly suggests a structural correspondence in gameplay between these two families.

\subsection{Balloon Paths}

Balloon Paths, denoted as $BP_n$, are paths of length $n$ (with $n+1$ vertices) where each vertex has exactly one loop attached. These graphs extend the concept of path graphs by incorporating loops at vertices.

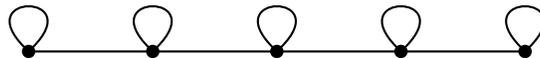
\begin{figure}[hbt!]
\centering
\begin{tikzpicture}[thick, main/.style={draw, circle, inner sep=0, outer sep=0, minimum size=1.5mm, fill=black}]
    \node[main] (1) {};
    \node[main] (2)[position=0:{15mm} from 1] {};
    \node[main] (3)[position=0:{15mm} from 2] {};
    \node[main] (4)[position=0:{15mm} from 3] {};
    \node[main] (5)[position=0:{15mm} from 4] {};
    \draw (1) -- (2) -- (3) -- (4) -- (5);
    \draw (1) to [out=135, in=45, looseness=25] (1);
    \draw (2) to [out=135, in=45, looseness=25] (2);
    \draw (3) to [out=135, in=45, looseness=25] (3);
    \draw (4) to [out=135, in=45, looseness=25] (4);
    \draw (5) to [out=135, in=45, looseness=25] (5);
\end{tikzpicture}
\caption{An illustration of a Balloon Path graph $BP_4$, showing a path of length 4 with a loop at each vertex.}
\label{fig:balloon-path-graph}
\end{figure}
\FloatBarrier

While a comprehensive proof is pending, our preliminary analysis based on induction and game simulations suggests a pattern tied to the parity of the path length in Balloon Paths.

For Balloon Paths of odd length ($BP_{2k+1}$), it appears that the game results in a tie, with players effectively canceling out each other's advantages. For Balloon Paths of even length ($BP_{2k}$), Player 1 consistently wins, with the winning margin alternating between one and three points. We can at least prove the following for even-length Balloon Paths:

\begin{proposition} When $n$ is odd, Player 1 may force a tie on $BP_n$. \label{Odd balloon paths are ties}
\end{proposition}
\begin{proof}
Player 1 is able to force a tie on an odd-length balloon path by taking the middle edge, creating two identical smaller balloon paths. Player 1 then mirrors Player 2's moves, where for each move Player 2 makes, Player 1 makes the same move on the other copy of that balloon path. The exception is that if Player 2 offers a vertex for Player 1 to take, Player 1 should take that vertex and then make the same offer to Player 2. Due to this symmetry strategy, Player 2 cannot win.

The only way this strategy could fail is if there exists some move that cannot be mirrored. However, since both subgraphs are identical and each vertex has exactly one loop and at most two incident edges, any move that Player 2 makes can always be mirrored by Player 1 in the corresponding subgraph. This ensures that for every vertex Player 2 captures, Player 1 captures a corresponding vertex.
\end{proof}

Note that the proof of Proposition \ref{Odd balloon paths are ties} is only very weakly using that the graph is a balloon path. We have the more general following proposition using the same argument. 

\begin{proposition} Let $G$ be a graph where every vertex has degree at least $2$, and let $H$ be a graph made from taking two copies of $G$ and connecting them by a single edge (not necessarily copied at the corresponding vertex). Then Player 1 may force a tie on $H$.
\end{proposition}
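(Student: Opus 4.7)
The plan is to adapt the strategy from Proposition~\ref{Even balloon paths are ties} essentially verbatim, observing that its proof uses very little about the balloon-path structure. First I would verify that Player 1's opening move---cutting the connecting edge $e$---is non-capturing: if $u$ and $v$ are the two endpoints of $e$, one in each copy of $G$, then by hypothesis $\deg_G(u), \deg_G(v) \geq 2$, so $\deg_H(u), \deg_H(v) \geq 3$, and after $e$ is removed each still has degree at least $2$. No vertex is captured, Player 1's turn ends, and the remaining position is the disjoint union $G_L \sqcup G_R$ of two identical copies of $G$, with Player 2 to move and an obvious swap symmetry $\sigma$ interchanging the copies.

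From there Player 1 plays the ``mirror with offer-back'' strategy: on each of Player 1's turns, reproduce Player 2's previous turn on the opposite copy under $\sigma$, with the modification that if Player 2's turn ended by offering a capturable chain in one copy, Player 1 first sweeps that chain and only then plays the mirror of the offering move, handing the identical offer to Player 2 on the opposite copy. I would prove by induction on the number of remaining edges that this maintains the invariant that, at the start of every Player 2 turn, $G_L$ and $G_R$ are in the same state and the scores are equal. Since $G_L$ and $G_R$ become vertex-disjoint once $e$ is cut, any move in $G_L$ affects only $G_L$ and any move in $G_R$ affects only $G_R$, so Player 2's turn decomposes into two independent sub-sequences, and Player 1's mirrored replay in the opposite copies produces identical captures in mirror locations. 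Together with the offer-back refinement, this balances the capture count on every turn, and when every edge is eventually removed each player has taken exactly $|V(G)|$ vertices.

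The main obstacle I foresee is the bookkeeping when Player 2's turn spans both copies---something that can occur once a capture forces Player 2 to continue and they choose to switch to the other copy. One must then check that Player 1 can schedule the $\sigma$-images of the two sub-sequences in an order consistent with the ``must move again after a capture'' rule and still end the turn on a non-capturing move. I plan to handle this using the disjointness of $G_L$ and $G_R$: because each sub-sequence acts on its own copy independently, Player 1 can safely play the mirror of Player 2's $G_L$-subsequence inside $G_R$ and the mirror of Player 2's $G_R$-subsequence inside $G_L$, each in Player 2's original relative order, so every mirror move encounters the same local state that Player 2 faced, and Player 2's final non-capturing move mirrors to Player 1's own final non-capturing move. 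The generality over all $G$ of minimum degree $2$ then follows because the only property of $G$ used in the argument is that cutting $e$ is non-capturing, which is exactly what the degree hypothesis guarantees.
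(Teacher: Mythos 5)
Your proposal is correct and follows essentially the same route as the paper: Player 1 cuts the connecting edge (non-capturing by the degree-$2$ hypothesis, which is exactly the role the paper assigns to that hypothesis) and then mirrors Player 2 across the swap symmetry of the two copies, taking any offered chain before handing back the symmetric offer. Your version is in fact more careful than the paper's two-sentence sketch, since you explicitly address the bookkeeping for turns that span both copies.
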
 
\begin{proof} The proof is essentially the same argument as with Proposition \ref{Odd balloon paths are ties}. Player 1 takes the connecting edge, and then mirrors each move Player 2 makes, with the exception of when offered a vertex, they take it and then make the symmetric offer. Note that we are using that every vertex is of degree at least 2 to make sure that Player 2 cannot take any vertices on their first move.
\end{proof}

\begin{table}[hbt!]
\centering
\begin{tabular}{|c|c|c|}
\hline
\textbf{Path Length} & \textbf{Winner} & \textbf{Score} \\
\hline
2 & P1 & (3, 0) \\
3 & Tie & (2, 2) \\
4 & P1 & (3, 2) \\
5 & Tie & (3, 3) \\
6 & P1 & (5, 2) \\
7 & Tie & (4, 4) \\
8 & P1 & (5, 4) \\
9 & Tie & (5, 5) \\
10 & P1 & (6, 5) \\
11 & Tie & (6, 6) \\
12 & P1 & (7, 6) \\
\hline
\end{tabular}
\caption{Computed outcomes for Balloon Path graphs based on the length of the path.}
\label{table:balloon-path-outcomes}
\end{table}

\FloatBarrier 

\subsection{Complete Graphs}

Complete graphs, denoted as $K_n$, are characterized by each vertex being connected to every other vertex. One might hope that the analysis of complete graphs would be relatively straightforward due to their high level of symmetry. While there does seem to be an empirically simple pattern, proving this pattern has proven to be difficult.

Table \ref{table:complete-graph-outcomes} shows the data for $K_n$. From the data, it appears that $K_n$ is a Player 1 win when $n \equiv 1$ or $2 \pmod{4}$, and a Player 2 win when $n \equiv 0$ or $3 \pmod{4}$.

\begin{table}[hbt!]
\centering
\begin{tabular}{|c|c|c|}
\hline
\textbf{\# of Vertices} & \textbf{Winner} & \textbf{Score} \\
\hline
1 & P1 & (2, 0) \\
2 & P1 & (2, 0) \\
3 & P2 & (0, 3) \\
4 & P2 & (0, 4) \\
5 & P1 & (4, 1) \\
6 & P1 & (5, 1) \\
7 & P2 & (2, 5) \\
8 & P2 & (2, 6) \\
9 & P1 & (7, 2) \\
10 & P1 & (7, 3) \\
\hline
\end{tabular}
\caption{Computed outcomes for complete graphs based on the number of vertices and their congruence modulo 4.}
\label{table:complete-graph-outcomes}
\end{table}
\FloatBarrier 

More time and computing power would be needed to verify that this pattern persists beyond 10 vertices, as $K_{11}$ is estimated to take 220 days to calculate with the current algorithm.

We can at least provide a plausibility argument that $K_{4n}$ should be a Player 2 win. We split the vertices of $K_{4n}$ into four sets, $A$, $B$, $C$, and $D$, each with the same number of vertices, and with each vertex associated with exactly one other vertex in the other three subsets.

We visualize the four sets as splitting up $K_{4n}$ into quadrants, so we have the following picture.
\begin{table}[htbp]
\centering
\begin{tabular}{c|c}
A & B \\
\hline
C & D  
\end{tabular}
\label{table:quadrant-visualization}
\end{table}
\FloatBarrier

Player 2 then mirrors Player 1 in the following way: If Player 1 makes a move with both vertices in a set, Player 2 makes the mirror image move in the corresponding diagonal set (so sets $A$ and $D$ are associated, and sets $B$ and $C$ are associated). If Player 1 makes a move with vertices in different sets, Player 2 makes the same move in the pair of other sets.

The exception is that if Player 1 makes an offer, Player 2 accepts the offer and then makes the corresponding offer. Unless there is a forest at the end, in which case Player 2 takes everything. This strategy seems to force a Player 2 win, as the game will remain tied until there is a forest, where Player 2 will win.

The problem with this argument is what happens if, when Player 2 accepts an offer and then makes an offer, Player 1 does not accept the offer? In this case, there will be a break in symmetry, and the mirroring strategy no longer works. However, this means that for Player 1 to break the mirroring, they will need to have given up at least one vertex, which should put them at a disadvantage.

\subsection{Petersen Graph}

The Petersen Graph is included in our analysis not as part of a systematic family study, but because of its significance as a standard test case in graph theory. Known for its unique properties and frequent appearance in counterexamples, the Petersen Graph serves as an interesting standalone case for our Dots and Boxes variant.

\begin{figure}[hbt!]
\centering
\begin{tikzpicture}[main/.style = {draw, circle}, rotate=18]
    \node[main] (1) at (0*72:2) {};
    \node[main] (2) at (1*72:2) {};
    \node[main] (3) at (2*72:2) {};
    \node[main] (4) at (3*72:2) {};
    \node[main] (5) at (4*72:2) {};
    \node[main] (6) at (0*72:1) {};
    \node[main] (7) at (1*72:1) {};
    \node[main] (8) at (2*72:1) {};
    \node[main] (9) at (3*72:1) {};
    \node[main] (10) at (4*72:1) {};

    \draw (1) -- (2) -- (3) -- (4) -- (5) -- (1);
    \draw (6) -- (8) -- (10) -- (7) -- (9) -- (6);
    \draw (1) -- (6);
    \draw (2) -- (7);
    \draw (3) -- (8);
    \draw (4) -- (9);
    \draw (5) -- (10);
\end{tikzpicture}
\caption{The Petersen Graph}
\label{fig:petersen-graph}
\end{figure}
\FloatBarrier

Our computational analysis reveals that on the Petersen Graph, Player 1 wins with a score of (9, 1). This decisive victory for Player 1 is noteworthy given the graph's structural complexity and symmetry, and adds to our understanding of how graph structure influences gameplay outcomes in this variant of Dots and Boxes.

\subsection{Hypercubes}

Hypercubes, also known as $n$-dimensional cubes or $n$-cubes, are a family of graphs that generalize the notion of a cube to higher dimensions.

Here is an illustration of a 3-dimensional hypercube, $Q_3$:

\begin{figure}[hbt!]
\centering
\begin{tikzpicture}[main/.style = {draw, circle}, scale=1.5]
    \node[main] (1) at (0,0) {};
    \node[main] (2) at (2,0) {};
    \node[main] (3) at (2,2) {};
    \node[main] (4) at (0,2) {};
    \node[main] (5) at (1,1) {};
    \node[main] (6) at (3,1) {};
    \node[main] (7) at (3,3) {};
    \node[main] (8) at (1,3) {};
    
    \draw (1) -- (2) -- (3) -- (4) -- (1);
    \draw (5) -- (6) -- (7) -- (8) -- (5);
    \draw (1) -- (5);
    \draw (2) -- (6);
    \draw (3) -- (7);
    \draw (4) -- (8);
\end{tikzpicture}
\caption{An illustration of a 3-dimensional hypercube, $Q_3$.}
\label{fig:hypercube}
\end{figure}
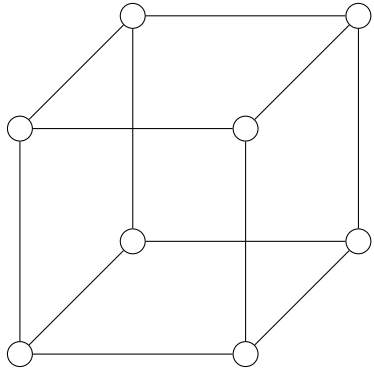
\FloatBarrier 

The table below summarizes the outcomes of games played on hypercubes with different dimensions. 

\begin{table}[hbt!]
\centering
\begin{tabular}{|c|c|c|}
\hline
\textbf{\# of Dimensions} & \textbf{Winner} & \textbf{Score} \\
\hline
1 & P1 & (2, 0) \\
2 & P2 & (0, 4) \\
3 & P2 & (2, 6) \\
4 & P2 & (6, 10) \\
\hline
\end{tabular}
\caption{Computed outcomes for Hypercubes based on the number of dimensions.}
\label{table:hypercubes-outcome}
\end{table}
\FloatBarrier 

\subsection{Prism graphs}

Prism graphs are a class of graphs formed by connecting two $n$-gons (regular polygons with $n$ sides) at corresponding vertices. Each vertex in one $n$-gon is connected to the corresponding vertex in the other $n$-gon, creating a 3-dimensional structure that can be visualized as a prism. Equivalently, the $n$th prism graph is obtained by taking the Cartesian product of the $n$th cycle graph with $P_2$. We denote the $n$th prism graph, made by connecting two $n$-gons, as $PG_n$.

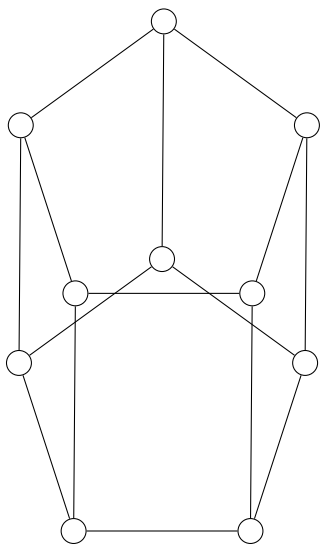
\begin{figure}[hbt!]
\centering
\begin{tikzpicture}[main/.style = {draw, circle}, rotate=18, scale=2]
    \node[main] (1) at (0:1) {};
    \node[main] (2) at (72:1) {};
    \node[main] (3) at (2*72:1) {};
    \node[main] (4) at (3*72:1) {};
    \node[main] (5) at (4*72:1) {};
    
    \node[main] (6) at ($(0:1)+(0.5,1.5)$) {};
    \node[main] (7) at ($(72:1)+(0.5,1.5)$) {};
    \node[main] (8) at ($(2*72:1)+(0.5,1.5)$) {};
    \node[main] (9) at ($(3*72:1)+(0.5,1.5)$) {};
    \node[main] (10) at ($(4*72:1)+(0.5,1.5)$) {};
    
    \draw (1) -- (2) -- (3) -- (4) -- (5) -- (1);
    \draw (6) -- (7) -- (8) -- (9) -- (10) -- (6);
    \draw (1) -- (6);
    \draw (2) -- (7);
    \draw (3) -- (8);
    \draw (4) -- (9);
    \draw (5) -- (10);
\end{tikzpicture}
\caption{An illustration of a prism graph, showing two 5-gons connected at corresponding vertices.}
\label{fig:double-ngon}
\end{figure}
\FloatBarrier 

Table \ref{table:double-ngon-outcomes} presents the outcomes for prism graphs for various $n$.

\begin{table}[hbt!]
\centering
\begin{tabular}{|c|c|c|}
\hline
\textbf{Size of $n$-gon} & \textbf{Winner} & \textbf{Score} \\
\hline
3 & P1 & (5, 1) \\
4 & P2 & (2, 6) \\
5 & P1 & (8, 2) \\
6 & P2 & (4, 8) \\
7 & P1 & (8, 6) \\
8 & P2 & (7, 9) \\
9 & Tie & (9, 9) \\
10 & P2 & (9, 11) \\
\hline
\end{tabular}
\caption{Computed outcomes for prism graphs based on $n$.}
\label{table:double-ngon-outcomes}
\end{table}
\FloatBarrier 

There is an apparent pattern where the winner alternates based on whether $n$ is even or odd, but the pattern breaks down for $n=9$ which is a tie. 


    

\vskip 20pt\noindent {\bf Acknowledgement.}
This paper was written as part of the Hopkins School Mathematics Seminar 2023-2024. 

\bibliographystyle{unsrt}
\bibliography{references}

\end{document}